\documentclass{amsart}
\usepackage{amssymb}

\newtheorem{theorem}{Theorem}[section]
\newtheorem{lemma}[theorem]{Lemma}
\newtheorem{proposition}[theorem]{Proposition}

\theoremstyle{definition}
\newtheorem{example}[theorem]{Example}

\numberwithin{equation}{section}

\newcommand{\Q}{{\mathbb{Q}}}
\newcommand{\C}{{\mathbb{C}}}
\newcommand{\Z}{{\mathbb{Z}}}
\newcommand{\Tr}{\mathop{\mathrm{Tr}}}
\newcommand{\OL}{\mathcal{O}}

\begin{document}

\title[Computing normal integral bases]{Computing normal integral bases 
of abelian number fields}

\author{Vincenzo Acciaro}
\address{Dipartimento di Economia\\ Universit\`a di Chieti--Pescara\\
Viale Pindaro 42\\ I--65126 Pescara, Italy}

%\author{Leonardo Cangelmi}
%\address{Dipartimento INGEO\\ Universit\`a di Chieti--Pescara\\
%Viale Pindaro 42\\ I--65126 Pescara, Italy}
%\email{cangelmi@unich.it}

\subjclass[2000]{Primary 11R04; Secondary 11R20, 11R33, 11Y40}
\keywords{Normal integral bases, abelian number fields, integral representations}

\date{}

\begin{abstract}
Let $L$ be an abelian number field of degree $n$ with Galois group 
$G$. 
In this paper we study how to compute efficiently a normal integral basis 
for $L$, if there is at least one, assuming that the group $G$ and 
an integral basis for $L$ are known.
\end{abstract}

\maketitle

\section{Introduction}\label{sec-intro}

\noindent 
Let $L$ be a Galois number field and let $\OL$ denote its ring of 
algebraic integers. A major problem of algorithmic algebraic number theory 
is to compute efficiently a normal integral basis for $L$ over $\Q$, 
that is a basis of $\OL$ as a $\Z$--module which is made up of all the
conjugates of a single algebraic integer. 

Although the theoretical aspects of the question are well understood, 
the effective construction of normal integral basis has been accomplished 
only in few particular cases.

In this paper we focus our attention to the abelian case. We extend the results obtained 
in \cite{acciarocangelmi} for abelian number fields of exponent 2, 3, 4 and 6 to any  abelian number field.

From a theoretical point of view, the Hilbert-Speiser theorem
asserts that an abelian number field $L$ admits a normal integral basis 
if and only if the conductor of $L$ is squarefree.
When this is the case, 
such a basis may be constructed by means of Gaussian periods:
if we denote the conductor of $L$ by $f$, then 
$\Tr_{\Q(\zeta_f)/L}(\zeta_f)$ generates a normal integral basis for $L$ 
(see \cite{narkiewicz}). 
From a practical point of view, this is quite unsatisfacory, since we
need to work in $\Q(\zeta_f)$ whose degree $\varphi(f)$
may be very large in comparison to $[L:\Q]$. 
Hence the computation of the relative trace might be very expensive.

The algorithm presented here has the advantage that it is
essentially independent of the conductor. 

In Section \ref{sec-gen}, we fix some notation, give some basic results, 
and reduce our problem to that of computing a generator of a certain ideal 
of the group ring $\Z[G]$ of the Galois group $G$ of $L$. 
In Section \ref{sec-dec}, we construct an explicit homomorphic embedding 
of this group ring into the finite product of some cyclotomic rings. 
In Section \ref{sec-algo}, we describe the essential steps of our 
algorithm. 
Finally, in Section \ref{sec-exmpl}, we give some explicit numerical 
examples.

\section{Notation and general framework}\label{sec-gen}

From now on, let $L$ be an abelian number field of degree $n$, 
let $\OL$ be its ring of algebraic integers, and let $\alpha$ be a 
primitive element for the extension $L/\Q$, so that $L=\Q[\alpha]$. 
We can assume that $\alpha \in \OL$, and 
we denote the minimal polynomial of $\alpha$ over $\Q$ by $m(x)$, which is 
therefore a monic polynomial of degree $n$ with integer coefficients. 

Let $G$ be the Galois group of $L/\Q$. Under the Extended Riemann Hypothesis
it is possible to compute efficiently $G$ by using the algorithm described 
in \cite{acciaro-gal}. Such algorithm gives us explicitly the action of the 
elements of $G$ on $\alpha$. 
So, we let $G = \left\{ g_1, \ldots, g_n \right\}$, and 
put $\alpha_i = g_i(\alpha)$, for $i=1,\ldots,n$. 
Then, we can assume that the conjugates of $\alpha$ constitute a basis 
of $L$ as a vector space over $\Q$.

Finally, we let $\left\{ \beta_1, \ldots, \beta_n \right\}$ be an integral 
basis of $\OL$, that is a basis of $\OL$ as a $\Z$--module. 
We recall that an integral basis can be computed using the algorithms 
described in \cite{cohen} and \cite{zassenhaus}.

Let $\Z[G]$ and $\Q[G]$ denote respectively the group ring of $G$ 
over $\Z$ and over $\Q$. 
The action of $G$ on $L$ can be extended by linearity to 
an action of $\Q[G]$ (or $\Z[G]$), setting 
\[
( \sum_{g \in G} a_g g) x = \sum_{g \in G} a_g g(x) ,
\]
for all $a_g$ in $\Q$ (resp.\ $\Z$), and all $x\in L$.

We say that $L$, or $\OL$, has a \emph{normal integral basis} when there 
exists $\theta \in \OL$ such that the conjugates of $\theta$ constitute a 
basis of $\OL$ as a $\Z$--module. In such a case we call $\theta$ a 
\emph{normal integral basis generator}.

If $\theta \in \OL$, then $\theta$ is a normal integral basis generator if and only if the discriminant of the set 
$ \left\{ g_1(\theta),\ldots, g_n(\theta) \right\} $
equals the discriminant of $L$. 
We will need to perform such a check just once, and 
our algorithm will return $\theta$ if such a $\theta$ exists,
and \lq$\theta$ does not exist\rq\ otherwise.

The fact that the conjugates of $\alpha$ constitute a basis for $L/\Q$ can 
be rephrased by saying that $L$ is free of rank one as a $\Q[G]$--module.
The integer counterpart of this property is given next.
\begin{lemma}
\label{th-nib}
The field $L$ possesses a normal integral basis if and only if
the ring $\OL$ is free of rank one as a $\Z[G]$--module.
\end{lemma}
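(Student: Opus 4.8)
The plan is to prove both implications directly from the definitions, using the action of $G$ (extended linearly to $\Z[G]$) on $\OL$.

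First, suppose $\OL$ is free of rank one as a $\Z[G]$--module, say $\OL = \Z[G]\cdot\theta$ for some $\theta\in\OL$. Then every element of $\OL$ is of the form $(\sum_{g\in G} a_g g)\theta = \sum_{g\in G} a_g\, g(\theta)$ with $a_g\in\Z$, so the conjugates $\{g_1(\theta),\ldots,g_n(\theta)\}$ generate $\OL$ as a $\Z$--module. Since $\OL$ has $\Z$--rank $n$ and we have $n$ generators, they must be $\Z$--linearly independent (a generating set of a free module of rank $n$ over a commutative noetherian ring, here $\Z$, with exactly $n$ elements is a basis — or simply: a surjection $\Z^n\to\Z^n$ of free modules is an isomorphism). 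Hence the conjugates of $\theta$ form a $\Z$--basis of $\OL$, i.e.\ $\theta$ is a normal integral basis generator and $L$ possesses a normal integral basis.

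Conversely, suppose $L$ possesses a normal integral basis, generated by $\theta\in\OL$, so that $\{g_1(\theta),\ldots,g_n(\theta)\}$ is a $\Z$--basis of $\OL$. Consider the $\Z[G]$--module homomorphism $\phi\colon \Z[G]\to\OL$ defined by $\phi(\sum_g a_g g) = \sum_g a_g\, g(\theta)$; this is well defined and $\Z[G]$--linear because the $G$--action on $\OL$ is the restriction of the $\Z[G]$--action, and $\phi$ sends the $\Z$--basis $\{g_1,\ldots,g_n\}$ of $\Z[G]$ onto the $\Z$--basis $\{g_1(\theta),\ldots,g_n(\theta)\}$ of $\OL$, hence is a $\Z$--module isomorphism. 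A $\Z[G]$--linear map that is bijective is a $\Z[G]$--module isomorphism, so $\OL\cong\Z[G]$ as $\Z[G]$--modules, which is precisely the assertion that $\OL$ is free of rank one over $\Z[G]$, with generator $\theta = \phi(1)$.

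The argument is essentially a bookkeeping exercise; the only point that deserves care is the compatibility of the two module structures, namely that the $G$--action on $L$ used to define ``conjugates'' coincides with the action of the group elements inside $\Z[G]$, which is exactly how the $\Z[G]$--action on $L$ was defined in this section. The other mildly non-trivial point is that a $\Z$--linear bijection between $\Z[G]$ and $\OL$ which is also $\Z[G]$--linear is automatically a $\Z[G]$--module isomorphism (its set-theoretic inverse is then automatically additive and $\Z[G]$--linear); this is standard and requires no computation. I do not expect any real obstacle here — the lemma is the integral analogue of the already-stated fact that $L$ is free of rank one over $\Q[G]$, and the proof is the same with $\Z$ in place of $\Q$, together with the elementary remark that a set of $n$ generators of a free $\Z$--module of rank $n$ is a basis.
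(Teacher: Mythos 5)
Your proof is correct; the paper in fact states this lemma without proof, and your argument is exactly the standard one it leaves implicit. Both directions are sound, including the two small points you flag: that $n$ generators of a free $\Z$--module of rank $n$ form a basis, and that a bijective $\Z[G]$--linear map is automatically a $\Z[G]$--isomorphism.
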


Assume now that $\OL$ has a normal integral basis and that $\theta$ is a 
normal integral basis generator, so that $\OL = \Z[G]\theta$.

Since $ \OL = \sum_{i=1}^n \Z\beta_i $
and $L$ is normal, we also have $ \OL = \sum_{i=1}^n \Z[G]\beta_i $
(where this sum is not direct).
In other words, the $\beta_i$'s form a set of generators of
$\OL$ as a $\Z[G]$--module. 
We would like to compute a single free generator of $\OL$ 
from the given set $\{\beta_1,\ldots,\beta_n \}$.

Let $D \in \Z$ be such that 
\[
\OL \subseteq \Z[G]\frac{\alpha}{D} .
\]
For instance, we could take $D$ equal to the discriminant 
of the set $\{\alpha_1,\ldots,\alpha_n\}$. 
For the sake of convenience, we put $\alpha'=\alpha/D$, and 
$\alpha'_i = g_i(\alpha') = \alpha_i/D$, for $i=1,\ldots,n$. 

On the one hand, we have $ \theta \in \Z[G]\alpha' $, so that 
$\theta = \sum_{i=1}^n t_i \alpha'_i$, for some $t_i \in \Z$. 
In other words, if we let $t = \sum_{i=1}^n t_i g_i$, then we have 
$\theta = t \alpha'$, where $t \in \Z[G]$.

On the other hand, we can express each element $\beta_j$ of the known 
integral basis as a linear combination with integral coefficients 
of the elements 
$ \alpha'_1, \ldots, \alpha'_n $.
In other words, for $ j=1,\ldots,n $ we can write
\begin{equation}\label{eq-bij}
\beta_j = \sum_{i=1}^n b_{ij} \alpha'_i , 
\end{equation}
with $b_{ij} \in \Z$. This is equivalent to say that 
$\beta_j = b_j\alpha'$,
where 
\begin{equation}\label{eq-bi}
b_j = \sum_{i=1}^n b_{ij}g_i \in \Z[G] .
\end{equation}

Therefore we have 
\[
\OL = \Z[G]t\alpha' = \left( \sum_{j=1}^n \Z[G]b_j \right)\alpha' ,
\]
and, since $\alpha$ gives a normal basis for $L/\Q$ and 
the same is true for $\alpha'$, 
\[
\Z[G]t = \sum_{j=1}^n \Z[G]b_j .
\]
In conclusion, we have reduced our problem to the problem of finding 
a generator of the ideal of $\Z[G]$ generated by the set 
$\{b_1,\ldots,b_n\}$. For future reference, let us call this ideal $I$, 
that is let us define
\begin{equation}\label{eq-ideal}
I = \sum_{j=1}^n \Z[G] b_j .
\end{equation}
We complete our arguments and state the main results of this section 
in the following theorem.

\begin{theorem}\label{th-nibg}
Let $L$ be an abelian number field, and let $I$ be the ideal of $\Z[G]$ 
defined by (\ref{eq-ideal}). 
Then, $L$ has a normal integral basis if and only if $I$ is principal.
More precisely, if $\alpha' \in L$ is such that $L=\Q[G]\alpha'$ and 
$\OL \subseteq \Z[G]\alpha'$, then we have:
\begin{itemize}
\item 
If $\theta$ is a normal integral basis generator, 
and $\theta = t\alpha'$, with 
$t \in \Z[G]$, then $I=\Z[G]t$. 
\item 
If $I$ is principal and $t \in \Z[G]$ is a generator of $I$, 
then $t\alpha'$ is normal integral basis generator.
\end{itemize}
\end{theorem}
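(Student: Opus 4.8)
The plan is to unwind the reductions already carried out in Section~\ref{sec-gen} and show that each implication of the theorem is essentially a restatement of facts established there, together with one application of Lemma~\ref{th-nib}. Recall that by hypothesis $\alpha'$ satisfies $L=\Q[G]\alpha'$ and $\OL\subseteq\Z[G]\alpha'$, and that the elements $b_j\in\Z[G]$ defined in (\ref{eq-bi}) satisfy $\beta_j=b_j\alpha'$ for a fixed integral basis $\{\beta_1,\dots,\beta_n\}$. The crucial preliminary observation, which I would isolate first, is that the $\Q[G]$--module map $\Q[G]\to L$, $u\mapsto u\alpha'$, is an isomorphism: it is surjective since $L=\Q[G]\alpha'$, and injective because both sides have $\Q$--dimension $n$. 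Consequently, for $u\in\Z[G]$ one has $u\alpha'=0$ only if $u=0$, and more importantly the map $u\mapsto u\alpha'$ is injective on $\Z[G]$ and identifies $\Z[G]$--submodules of $\Z[G]$ with $\Z[G]$--submodules of $L$ contained in $\Z[G]\alpha'$.

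Granting this, I would argue as follows. First suppose $L$ has a normal integral basis, with generator $\theta$, so $\OL=\Z[G]\theta$ by Lemma~\ref{th-nib}. Since $\OL\subseteq\Z[G]\alpha'$, we may write $\theta=t\alpha'$ with $t\in\Z[G]$, and then $\OL=\Z[G]t\alpha'$. On the other hand $\OL=\sum_{i}\Z\beta_i=\sum_j\Z[G]b_j\alpha'=I\alpha'$, using (\ref{eq-bij}) and normality as in the displayed computation preceding (\ref{eq-ideal}). Thus $\Z[G]t\alpha'=I\alpha'$, and applying injectivity of $u\mapsto u\alpha'$ to these submodules of $\Z[G]\alpha'$ yields $\Z[G]t=I$; in particular $I$ is principal. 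This proves the first bullet, and simultaneously the ``only if'' direction of the main equivalence.

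Conversely, suppose $I$ is principal, say $I=\Z[G]t$ with $t\in\Z[G]$. Then $\Z[G]t\alpha'=I\alpha'=\sum_j\Z[G]b_j\alpha'=\sum_j\Z[G]\beta_j=\OL$, the last equality because $L$ is normal and the $\beta_j$ form a $\Z$--basis of $\OL$, hence a fortiori a $\Z[G]$--generating set. So $\OL=\Z[G](t\alpha')$ with $t\alpha'\in\OL$, which says $\OL$ is free of rank one as a $\Z[G]$--module with generator $t\alpha'$; by Lemma~\ref{th-nib} (and its proof, which exhibits the free generator as a normal integral basis generator) $t\alpha'$ is a normal integral basis generator. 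This gives the second bullet and the ``if'' direction.

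The only genuine point requiring care—the ``hard part,'' though it is mild—is the transfer between submodules of $\Z[G]$ and submodules of $L$: one must be sure that passing through the map $u\mapsto u\alpha'$ does not lose information, i.e.\ that $\Z[G]t\alpha'=I\alpha'$ really does force $\Z[G]t=I$ and not merely an equality after tensoring with $\Q$. This is exactly where the freeness of $L$ over $\Q[G]$ (equivalently, the linear independence of the conjugates $\alpha'_1,\dots,\alpha'_n$ over $\Q$) is used: it makes $u\mapsto u\alpha'$ an injection of $\Z[G]$ into $L$, so distinct $\Z[G]$--submodules of $\Z[G]$ have distinct images. Everything else is bookkeeping with the identities (\ref{eq-bij})--(\ref{eq-ideal}) already in hand.
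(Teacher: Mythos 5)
Your proof is correct and follows essentially the same route as the paper: it unwinds the identities $\OL=\Z[G]t\alpha'=I\alpha'$ from Section~\ref{sec-gen} and uses the injectivity of $u\mapsto u\alpha'$ (coming from $\alpha'$ generating a normal basis) to cancel $\alpha'$ and pass between ideals of $\Z[G]$ and submodules of $\OL$. You merely make explicit the cancellation step that the paper leaves implicit in the phrase ``since $\alpha$ gives a normal basis for $L/\Q$ and the same is true for $\alpha'$,'' which is a welcome clarification but not a different argument.
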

\begin{proof}
The above arguments show that if $L$ has a normal integral basis and 
$\theta$ is a normal integral basis generator then $I$ is principal and 
is generated by $t$, which is defined by the relation $\theta=t\alpha'$. 
Then, it is easily seen that the converse holds true, and that 
if $t$ is a generator of $I$, then the element $t\alpha'$ is integral 
and it is a normal integral basis generator.
\end{proof}

\section{Decomposition of the group ring $\Q G$ }\label{sec-dec}
We begin by stating a result about rational group rings of finite abelian
groups, proved by S. Perlis and G. Walker in \cite{perliswalker}
as well as by Higman \cite{higman}.
\begin{theorem}[Perlis, Walker]\label{th-perliswalker}
Let $G$ be a finite abelian group of order $n$. Then
\[
\Q G \cong \oplus_{q|n}     {a_q}        \,\Q(\zeta_q) 
\]
where 
$ {a_q}      \Q(\zeta_q)   $ denotes the direct sum 
of $a_q$ copies of $\Q(\zeta_q)$.
Moreover, $a_q = n_q/\varphi(q)$, 
with $n_q$ equal to the number of elements of order $q$ in $G$.
\end{theorem}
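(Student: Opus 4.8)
The plan is to realize $\Q G$ as a commutative semisimple $\Q$-algebra — hence a finite product of number fields — and then to pin down these fields and their multiplicities using the characters of $G$. First I would record the semisimplicity: since $\mathrm{char}\,\Q=0$ does not divide $|G|$, Maschke's theorem applies, and since $G$ is abelian the ring $\Q G$ is commutative; by the Artin--Wedderburn theorem (equivalently, because a finite-dimensional commutative semisimple algebra is a product of fields) we get $\Q G\cong\prod_i K_i$ with each $K_i/\Q$ a number field. Everything then reduces to determining the $K_i$ up to isomorphism and the multiplicity with which each occurs.

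Next I would bring in the character group $\widehat G=\mathrm{Hom}(G,\C^\times)$. Each $\chi\in\widehat G$ extends $\Q$-linearly to an algebra homomorphism $\rho_\chi\colon\Q G\to\C$, $g\mapsto\chi(g)$, whose image is the subfield $\Q(\chi):=\Q(\chi(g):g\in G)$. Since $\Q G$ is a product of fields, its maximal ideals are precisely the kernels of its $\Q$-algebra maps to $\C$, and two such maps share a kernel exactly when they differ by an automorphism of $\overline\Q$; because $\sigma\circ\rho_\chi=\rho_{\sigma\circ\chi}$, this happens iff $\chi$ and $\chi'$ lie in the same orbit of the natural $\mathrm{Gal}(\overline\Q/\Q)$-action on $\widehat G$ (which acts as $\chi\mapsto\chi^k$ on the cyclic image of $\chi$). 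Choosing one representative per orbit yields an injective $\Q$-algebra map $\Q G\hookrightarrow\prod_{[\chi]}\Q(\chi)$, and a dimension count — the left side has dimension $n$, the right side has $\sum_{[\chi]}[\Q(\chi):\Q]=\sum_{[\chi]}\#(\text{orbit of }\chi)=\#\widehat G=n$ — forces it to be an isomorphism. So the factors of $\Q G$ are exactly the fields $\Q(\chi)$, one for each Galois orbit in $\widehat G$.

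Finally I would identify and count. All characters in one orbit share an order $q$ (as $\chi^k$ has order $q$ when $\gcd(k,q)=1$); for such a $\chi$ the map factors through the cyclic quotient $G/\ker\chi$ of order $q$, so its image is the full group of $q$-th roots of unity, giving $\Q(\chi)=\Q(\zeta_q)$ and an orbit of size $[\Q(\zeta_q):\Q]=\varphi(q)$. Hence the orbit of order $q$ contributes the factor $\Q(\zeta_q)$, and the number of such orbits equals (number of order-$q$ elements of $\widehat G$)$/\varphi(q)$. Using $\widehat G\cong G$ as abstract groups, $\widehat G$ has exactly $n_q$ elements of order $q$; dividing by $\varphi(q)$ yields $a_q$, and summing over the divisors $q\mid n=|G|$ (the possible orders) gives $\Q G\cong\oplus_{q\mid n}a_q\,\Q(\zeta_q)$.

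I expect the main obstacle to be the bookkeeping in the second step: establishing cleanly that the partition of $\widehat G$ into Galois orbits matches bijectively the factorization of $\Q G$ into fields — equivalently, that $\rho_\chi$ and $\rho_{\chi'}$ cut out the same factor precisely when $\chi'=\chi^k$ with $\gcd(k,\mathrm{ord}(\chi))=1$. Once that correspondence (maximal ideals of $\Q G \leftrightarrow$ Galois orbits of characters) is set up, the dimension count and the counting of orbits by order are routine. An alternative route avoids characters altogether: treat cyclic $G$ first via $\Q G\cong\Q[x]/(x^m-1)\cong\prod_{d\mid m}\Q(\zeta_d)$, then pass to general abelian $G$ through $\Q[G\times H]\cong\Q G\otimes_\Q\Q H$ together with $\Q(\zeta_a)\otimes_\Q\Q(\zeta_b)\cong\varphi(\gcd(a,b))\,\Q(\zeta_{\mathrm{lcm}(a,b)})$; there the multiplicity bookkeeping inside the tensor products is the corresponding difficulty.
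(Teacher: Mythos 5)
Your proof is correct and follows essentially the same character-theoretic route the paper takes: although the paper defers the formal proof to the references, its explicit construction in Section~\ref{sec-dec} --- grouping the characters of $G$ into classes of algebraically conjugate (i.e.\ Galois-conjugate) characters and mapping $\Q G$ isomorphically onto $\Q(\zeta_{q_1})\oplus\cdots\oplus\Q(\zeta_{q_k})$, with the count ``number of irreducible characters of order $q$ equals $\varphi(q)$ times the number of cyclic subgroups of order $q$'' --- is exactly your argument. The Maschke/Wedderburn framing and the dimension count you add are sound and simply make explicit what the paper leaves implicit.
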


A very enjoyable proof of this result can be found in \cite{polcino}.
However, for our purposes, we are going to construct this isomorphism explicitly, 
and then we will restrict it to $\Z G$.

Following Higman's approach
the idempotent $e_\chi \in \C G$ 
associated to any character (representation of degree 1) $\chi$ is defined
as:
\[
e_\chi = \frac{1}{|G|} \,\sum_{g \in G} \chi(g^{-1}) \, g .
\]
Let $\chi_1, \ldots, \chi_n$ be the irreducible characters of $G$, 
and denote the character group by $ \widehat{G}$.
From now on, we write $e_i$ in place of $e_{\chi_i}$, for short. 

Extending the characters linearly to the full group ring, we see that
any 
element $h$ of $\C G$ admits two unique representations:
\[
h = \sum_{i=1}^n h_i g_i
=
\sum_{i=1}^n c_i e_i ,
\]
and we have the well known decomposition 
\[
\C G = \C e_1 \oplus \cdots \oplus \C e_n .
\]
If we denote the character matrix 
$\left( \chi_i(g_j) \right)_{i,j=1,\ldots,n}$ of $G$ by $A$, 
then we have the following equality:
\begin{equation}\label{eq-ci} 
\left( \begin{array}{c} c_1 \\ \vdots \\ c_n \end{array} \right) 
 = A 
\left( \begin{array}{c} h_1 \\ \vdots \\ h_n \end{array} \right) . 
\end{equation}
Conversely, as $A$ is not singular, 
if the coefficients $(c_i)$ of an element $h$ of $\C G$ are known,
we can easily compute the coefficients $(h_i)$:
\begin{equation}\label{eq-hi}
\left( \begin{array}{c} h_1 \\ \vdots \\ h_n \end{array} \right) 
= A^{-1} 
\left( \begin{array}{c} c_1 \\ \vdots \\ c_n \end{array} \right) . 
\end{equation}
Here we can compute directly the inverse of the matrix $A$, 
as it is well known that 
$ (A^{-1})_{ij}=\overline{\chi_j(g_i)}/|G|= {\chi_j(g_i^{-1})}/|G| $.
Hence, we have:
\begin{proposition}
The map $\C G \rightarrow \C^n$ which sends $h$
to $\left(\chi_1(h),\ldots,\chi_n(h)\right)$ is a $\C$-algebra isomorphism, 
and the associated matrix with respect to the bases 
$\left(g_1,\ldots,g_n\right)$ and $\left(e_1,\ldots,e_n\right)$ 
is the character matrix $A$. 
\end{proposition}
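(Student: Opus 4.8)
The plan is to verify that the map $\Phi \colon \C G \to \C^n$, $h \mapsto (\chi_1(h), \ldots, \chi_n(h))$, is a $\C$-algebra homomorphism, that it is bijective, and that its matrix in the indicated bases is exactly $A$. First I would check the algebra homomorphism property. Additivity and $\C$-linearity are immediate from the definition, since each $\chi_i$ was extended to $\C G$ by linearity. For multiplicativity it suffices, by bilinearity, to check it on basis elements $g, g' \in G$: one has $\chi_i(gg') = \chi_i(g)\chi_i(g')$ because $\chi_i$ is a group homomorphism into $\C^\times$ (here we use that $G$ is abelian, so every irreducible representation has degree one). That the identity $1 \in \C G$ maps to $(1,\ldots,1)$ follows from $\chi_i(1) = 1$.

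Next I would establish bijectivity. The key observation, already recorded in the excerpt, is that in the basis $(g_1,\ldots,g_n)$ of the source and the standard basis of $\C^n$, the map $\Phi$ is represented by the character matrix $A = (\chi_i(g_j))_{i,j}$; indeed, writing $h = \sum_j h_j g_j$ we get $\chi_i(h) = \sum_j h_j \chi_i(g_j)$, which is the $i$-th coordinate of $A(h_1,\ldots,h_n)^{t}$. So $\Phi$ is bijective precisely when $A$ is invertible, and invertibility follows from the first orthogonality relations for characters: the stated formula $(A^{-1})_{ij} = \overline{\chi_j(g_i)}/|G| = \chi_j(g_i^{-1})/|G|$ exhibits an explicit inverse, since $\frac{1}{|G|}\sum_{k} \chi_i(g_k)\chi_j(g_k^{-1}) = \delta_{ij}$. (Alternatively one may argue abstractly: $\C G$ is a commutative semisimple $\C$-algebra of dimension $n$, hence isomorphic to $\C^n$, and the $n$ distinct algebra homomorphisms $\C G \to \C$ are exactly the $\chi_i$, so $\Phi$ is the canonical identification.)

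Finally I would identify the matrix of $\Phi$ with respect to the bases $(g_1,\ldots,g_n)$ and $(e_1,\ldots,e_n)$ rather than the standard basis on the target. Under the isomorphism $\C^n \cong \C e_1 \oplus \cdots \oplus \C e_n$ sending the standard basis vector $\mathbf{e}_i$ to $e_i$, the map $\Phi$ is carried to $h \mapsto \sum_i \chi_i(h) e_i$; one checks this is the correct decomposition by verifying $\chi_j(e_i) = \delta_{ij}$, which is exactly the defining property of the idempotents $e_i$. Then for $h = \sum_j h_j g_j$ the coefficient vector $(c_i)$ with $h = \sum_i c_i e_i$ satisfies $c_i = \chi_i(h) = \sum_j \chi_i(g_j) h_j$, i.e. $(c_i) = A(h_j)$, which is formula~(\ref{eq-ci}); hence the matrix of $\Phi$ in these bases is $A$, as claimed.

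The only genuine content is the invertibility of $A$, i.e. the orthogonality relations for the characters of the finite abelian group $G$; everything else is bookkeeping. Since the excerpt already furnishes the explicit form of $A^{-1}$, even that step reduces to citing the standard orthogonality relation, so the proof is short and the main obstacle is merely making the change-of-basis identification on the target side precise.
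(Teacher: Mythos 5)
Your proof is correct and follows essentially the same route as the paper, which states this proposition as a summary of the immediately preceding discussion: the linear extension of the characters, the idempotent decomposition $\C G = \C e_1 \oplus \cdots \oplus \C e_n$, and the explicit inverse $(A^{-1})_{ij} = \chi_j(g_i^{-1})/|G|$ coming from the orthogonality relations. You have simply made explicit the bookkeeping (multiplicativity of the $\chi_i$ on degree-one characters, $\chi_j(e_i)=\delta_{ij}$, and the change of basis on the target) that the paper leaves implicit.
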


Now, we turn our attention back to the group ring $\Q G$ and to 
Theorem \ref{th-perliswalker}. 
For each irreducible character $\chi_i$, $i=1,\ldots,n$, let us denote 
its order in $\widehat G$ by $q_i$. 
Therefore $\chi_i(g)$ is a root of unity of order $q_i$ 
for all $g \in G$, and is a primitive root of unity of 
order $q_i$ for some $g \in G$.

Note that if $\chi$ is a character of $G$ with values in some 
Galois number field $K$ (actually, some $\Q(\zeta)$, for some root 
of unity $\zeta$) and $\sigma$ is an automorphism of $K$, 
then $\sigma(\chi)$ is another character of $G$ 
with values in the same field $K$. 
We say that $\chi_i$ and $\chi_j$, with values in some $K$, 
are \emph{algebraically conjugate} 
if there exists an automorphism $\sigma$ of $K$ such that 
$\chi_j = \sigma(\chi_i)$.
Thus,  $\chi_i$ and $\chi_j$  are algebraically conjugate
if and only if they are equivalent over $\Q$.
However, for our purposes, it is better to think in terms of
conjugation rather than in terms of equivalence of the associated representations.

Let us select from the set $\left\{\chi_1, \ldots, \chi_n\right\}$ a 
maximal subset of irreducible characters such that no one of them is 
algebraically conjugate to some other.

Let us assume that $\chi_{r}$ is an irreducible character
of $G$ of order $o_r$, so that
$\chi_{r}$ maps $\Q G$ onto $\Q[\zeta_{o_r}]$.
If $\sigma$ is an automorphism of $\Q[\zeta_{o_r}]$, then $\sigma$ sends $\zeta_{o_r}$ to
$\zeta_{o_r}^{s}$, where $o_r$ and $s$ are coprime, and
therefore $\chi_r$ to $\chi_r^s$.

Viceversa, if $\chi_a$ and $\chi_r$ are two characters of $G$
such that $\chi_a = \chi_r^s$ in  $\widehat{G}$,
with $(r,o_r)=1$, then $\chi_a = \sigma(\chi_r)$ where  $\sigma$ sends $\zeta_{o_r}$ to
$\zeta_{o_r}^{s}$ in $\Q[\zeta_{o_r}]$.

Thus, the computation of a maximal subset of irreducible characters such that
no one of them is algebraically conjugate to some other  can be done by
applying one of the following methods:
\begin{itemize}
\item
For each character $\chi$, remove all co-prime powers of $\chi$ from the  list;
\item
For each character $\chi$, form the orbit of $\chi$ under the action of the
Galois group of $\Q[\zeta_{o_r}]$, and remove all the conjugates of $\chi$ from
list.
\end{itemize}

After rearranging the original set, we can assume that 
$\left\{\chi_1, \ldots, \chi_k\right\}$ is such a subset, 
where $1\leq k\leq n$. 

For further reference, for any character which has not been selected, 
we keep track of the selected character which is algebraically conjugate 
to it, and of the automorphism which gives such a relation. 
Precisely, for any $i$, where $k<i\leq n$, we take note of the (unique) index 
$k_i$, with $1\leq k_i \leq k$, such that $\chi_i$ and $\chi_{k_i}$ are 
algebraically conjugate, and also of the automorphism $\sigma_i$ 
such that $\sigma_i(\chi_{k_i})=\chi_i$.

It is easy to see that, for any divisor $q$ of $n$, the number of 
irreducible characters of order $q$ equals $\varphi(q)$ times the 
number of subgroups of $G$ of order $q$, and the maximal number of 
irreducible characters of order $q$ which are not algebraically conjugate 
equals the number of subgroups of $G$ of order $q$. 
In this way, we recover the same arithmetic conditions stated in 
Theorem \ref{th-perliswalker}. 

Summarizing, we have:
\begin{proposition}\label{th-qg}
Assume that $\{\chi_1, \ldots ,\chi_k\}$ is a maximal subset of irreducible 
characters which are not pairwise algebraically conjugate. 
Then, the map $\phi : \Q G \rightarrow 
\Q(\zeta_{q_1})  \oplus \cdots   \oplus \Q(\zeta_{q_k})$ 
which sends $h$ to $\left(\chi_1(h),\ldots,\chi_k(h)\right)$
is a $\Q$-algebra isomorphism. 
Moreover, if we let $B$ be the matrix made of the first $k$ rows of the 
character matrix $A$, then for $h = \sum_{i=1}^n h_i g_i$, we have 
$\phi(h) = (c_1,\ldots,c_k)$, where 
\begin{equation}\label{eq-ci-b}
\left( \begin{array}{c} c_1 \\ \vdots \\ c_k \end{array} \right) 
= B 
\left( \begin{array}{c} h_1 \\ \vdots \\ h_n \end{array} \right) . 
\end{equation}
Finally, if $(c_1,\ldots,c_k) \in 
\Q(\zeta_{q_1})  \oplus \cdots   \oplus \Q(\zeta_{q_k})$, 
we let $c_i=\sigma_i(c_{k_i})$ for all $i$ with $k<i\leq n$, 
and we have $\phi^{-1}((c_1,\ldots,c_k)) = \sum_{i=1}^n h_i g_i$, where 
\begin{equation}\label{eq-hi-b}
\left( \begin{array}{c} h_1 \\ \vdots \\ h_n \end{array} \right) 
= A^{-1} 
\left( \begin{array}{c} c_1 \\ \vdots \\ c_n \end{array} \right) . 
\end{equation}
\end{proposition}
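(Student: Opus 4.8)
The plan is to realize $\phi$ as the composite of the $\C$-algebra isomorphism $\Psi : \C G \to \C^{n}$, $h\mapsto(\chi_{1}(h),\ldots,\chi_{n}(h))$ from the preceding Proposition with the projection onto the first $k$ coordinates, and then to pin down its image on $\Q G$. First I would note that, extended by linearity, each $\chi_{i}$ is a $\Q$-algebra homomorphism $\Q G\to\C$ whose image is the $\Q$-subalgebra of $\C$ generated by the values $\chi_{i}(g)$, $g\in G$. Since $\chi_{i}$ has order $q_{i}$ in $\widehat{G}$, the (cyclic) image of $\chi_{i}:G\to\C^{\times}$ is exactly the group $\mu_{q_{i}}$ of $q_{i}$-th roots of unity, so that subalgebra is $\Q(\zeta_{q_{i}})$. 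Hence $\phi=(\chi_{1},\ldots,\chi_{k})$ is a $\Q$-algebra homomorphism $\Q G\to\Q(\zeta_{q_{1}})\oplus\cdots\oplus\Q(\zeta_{q_{k}})$, and the matrix identity (\ref{eq-ci-b}) is immediate: for $h=\sum_{j}h_{j}g_{j}$ one has $\chi_{i}(h)=\sum_{j}\chi_{i}(g_{j})h_{j}$, which is the $i$-th row of $B$ applied to $(h_{1},\ldots,h_{n})^{\mathrm{t}}$.

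Next I would establish injectivity. If $\phi(h)=0$ then $\chi_{i}(h)=0$ for $i\le k$; and for $k<i\le n$ the relation $\sigma_{i}(\chi_{k_{i}})=\chi_{i}$, together with the fact that the $h_{j}$ are rational and hence fixed by $\sigma_{i}$, gives $\chi_{i}(h)=\sigma_{i}(\chi_{k_{i}}(h))=\sigma_{i}(0)=0$. Thus $\Psi(h)=0$, and since $\Psi$ is injective, $h=0$. For surjectivity I would compare $\Q$-dimensions: the source has dimension $n$, while in the target the number of indices $i\in\{1,\ldots,k\}$ with $q_{i}=q$ equals --- by the discussion preceding the statement, which recovers the arithmetic of Theorem \ref{th-perliswalker} --- the multiplicity $a_{q}$, so that $\sum_{i=1}^{k}\varphi(q_{i})=\sum_{q\mid n}a_{q}\varphi(q)=\dim_{\Q}\Q G=n$. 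An injective $\Q$-linear map between $\Q$-vector spaces of the same finite dimension is bijective, so $\phi$ is a $\Q$-algebra isomorphism.

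Finally, for the inverse formula (\ref{eq-hi-b}): given $(c_{1},\ldots,c_{k})$ in the target, surjectivity yields $h\in\Q G$ with $\chi_{i}(h)=c_{i}$ for $i\le k$; setting $c_{i}:=\sigma_{i}(c_{k_{i}})$ for $k<i\le n$, the same computation as for injectivity gives $\chi_{i}(h)=\sigma_{i}(\chi_{k_{i}}(h))=\sigma_{i}(c_{k_{i}})=c_{i}$, so $\Psi(h)=(c_{1},\ldots,c_{n})$, and then (\ref{eq-hi}) applied to $\Psi$ gives exactly (\ref{eq-hi-b}). I expect the only non-routine point to be the surjectivity, whose crux is the book-keeping identity $\#\{\,i\le k : q_{i}=q\,\}=a_{q}$ --- that the selected, pairwise non-conjugate characters, grouped by order, reproduce precisely the Perlis--Walker multiplicities; everything else is a direct unwinding of the definitions and of the $\C$-algebra isomorphism already in hand.
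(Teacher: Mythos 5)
Your proof is correct and follows essentially the same route as the paper, which states this proposition as a summary of the preceding discussion: restricting the $\C$-algebra isomorphism $\C G\cong\C^{n}$, recovering the omitted coordinates via the automorphisms $\sigma_{i}$, and matching dimensions through the Perlis--Walker count $\sum_{q\mid n}a_{q}\varphi(q)=n$. You have merely made explicit (injectivity via the conjugate characters, surjectivity via the dimension count) what the paper leaves implicit, so there is nothing to object to.
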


At last, we consider the integral case, by restricting the map 
defined on $\Q G$, and we obtain:

\begin{proposition}\label{th-zg}
The restriction of $\phi$ to $\Z G$ gives a $\Z$--algebra homomorphism 
\[
\psi: \Z G \hookrightarrow 
\Z[\zeta_{q_1}]   \oplus \cdots   \oplus \Z[\zeta_{q_k}] ,
\]
which is injective. 
If $h = \sum_{i=1}^n h_i g_i$, then $\psi(h) = (c_1,\ldots,c_k)$, where 
$(c_1,\ldots,c_k)$ is given by (\ref{eq-ci-b}). 
If $(c_1,\ldots,c_k) \in \psi(\Z G)$, 
then $\psi^{-1}((c_1,\ldots,c_k)) = \sum_{i=1}^n h_i g_i$, 
where $(h_1,\ldots, h_n)$ is given by (\ref{eq-hi-b}). 
\end{proposition}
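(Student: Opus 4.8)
The plan is to obtain Proposition~\ref{th-zg} as a direct corollary of Proposition~\ref{th-qg} by restricting $\phi$ to the subring $\Z G \subset \Q G$ and checking that the image lands in the sublattice $\Z[\zeta_{q_1}] \oplus \cdots \oplus \Z[\zeta_{q_k}]$. First I would note that $\phi$ is already a $\Q$-algebra isomorphism by Proposition~\ref{th-qg}, hence in particular a ring homomorphism; its restriction $\psi$ to $\Z G$ is therefore automatically a ring homomorphism, and injectivity of $\psi$ is inherited for free from injectivity of $\phi$. So the only genuine content is to verify that $\psi(\Z G) \subseteq \Z[\zeta_{q_1}] \oplus \cdots \oplus \Z[\zeta_{q_k}]$, i.e.\ that $\phi$ carries the integral group ring into the product of the rings of integers of the cyclotomic factors.

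For that containment I would argue componentwise. Fix $h = \sum_{i=1}^n h_i g_i \in \Z G$ with all $h_i \in \Z$, and fix a selected character $\chi_j$, $1 \le j \le k$. By formula (\ref{eq-ci-b}) the $j$-th component of $\phi(h)$ is $c_j = \chi_j(h) = \sum_{i=1}^n h_i \chi_j(g_i)$. Now each $\chi_j(g_i)$ is a root of unity (of order dividing $q_j$), hence an algebraic integer lying in $\Z[\zeta_{q_j}]$; since $\Z[\zeta_{q_j}]$ is a ring and the $h_i$ are rational integers, the $\Z$-linear combination $c_j$ again lies in $\Z[\zeta_{q_j}]$. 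Thus $\phi(h) = (c_1,\ldots,c_k)$ with $c_j \in \Z[\zeta_{q_j}]$ for each $j$, which is exactly the assertion $\psi(h) \in \Z[\zeta_{q_1}] \oplus \cdots \oplus \Z[\zeta_{q_k}]$.

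The formula for $\psi(h)$ is then just the restriction of (\ref{eq-ci-b}), already recorded in Proposition~\ref{th-qg}. For the formula for $\psi^{-1}$ on an element $(c_1,\ldots,c_k) \in \psi(\Z G)$, I would simply invoke the inverse description from Proposition~\ref{th-qg}: extend the tuple to $(c_1,\ldots,c_n)$ via $c_i = \sigma_i(c_{k_i})$, and then $\phi^{-1}((c_1,\ldots,c_k)) = \sum_{i=1}^n h_i g_i$ with $(h_1,\ldots,h_n)$ given by (\ref{eq-hi-b}). Since $(c_1,\ldots,c_k)$ is assumed to be in the image $\psi(\Z G)$, the preimage $\phi^{-1}((c_1,\ldots,c_k))$ already lies in $\Z G$, so the same $h_i$ work and $\psi^{-1}$ is well-defined by this recipe; no separate integrality verification is needed here because it is built into the hypothesis $(c_1,\ldots,c_k) \in \psi(\Z G)$.

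I expect no serious obstacle: the proof is essentially the observation that character values are algebraic integers in the appropriate cyclotomic rings, together with the fact that restricting an injective ring isomorphism to a subring yields an injective ring homomorphism. The only mild subtlety worth a sentence is that $\psi$ need not be surjective onto $\Z[\zeta_{q_1}] \oplus \cdots \oplus \Z[\zeta_{q_k}]$ — the image is in general a proper subring of finite index, which is precisely why we write $\hookrightarrow$ rather than $\cong$ — and accordingly the $\psi^{-1}$ formula is only claimed on the image $\psi(\Z G)$, not on the whole product ring.
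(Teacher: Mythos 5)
Your proposal is correct and follows essentially the same route as the paper: the key step in both is the observation that $\chi_j\bigl(\sum_i h_i g_i\bigr)=\sum_i h_i\chi_j(g_i)$ lies in $\Z[\zeta_{q_j}]$ because the character values are roots of unity there, with everything else (homomorphism property, injectivity, the formulas for $\psi$ and $\psi^{-1}$) inherited from Proposition~\ref{th-qg}. Your closing remark on non-surjectivity matches the paper's own remark immediately after its proof.
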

\begin{proof}
If $\chi_j$ is an irreducible character of $G$ 
and $a_1,\ldots,a_n \in \Z$, then
\[
\chi_j\left (\sum_{i=1}^n a_i \, g_i\right) =\sum_{i=1}^n a_i \,\chi_j(g_i) 
\in \Z[\zeta_{q_j}] .
\]
This shows that the restriction of $\phi$ to $\Z G$ 
defines an algebra homomorphism from $\Z G$ into 
$\Z[\zeta_{q_1}]   \oplus \cdots   \oplus \Z[\zeta_{q_k}]$. 
The other claims follows from Proposition \ref{th-qg}.
\end{proof}
We remark that the homomorphism $\psi$ is not surjective.

\section{Ideals in products of integral domains}\label{sec-restr}

In the next section we are going to exploit Theorem \ref{th-nibg} and 
Proposition \ref{th-zg} in order to give an explicit algorithm to find 
a normal integral basis of $L$. 
Here we state some other preliminary results.

Since it is quite difficult to work in $\Z[G]$, we will consider 
the extension of the ideal $I$ by means of the homomorphism $\psi$. 
Then we would like to recover a generator of $I$ starting from 
a generator of such an extended ideal, whenever they exist. 

We first state a simple lemma about product of rings.  
If $\Gamma$ is any ring, we denote its group of units by $U(\Gamma)$.

\begin{lemma}\label{th-rproduct}
Let $\Gamma$ be commutative ring which can be expressed as a direct product
$\Gamma = \Gamma_1 \times \cdots \times \Gamma_k $
of some rings $\Gamma_i$, where $i=1,\ldots,k$.
\begin{itemize}
\item
Any ideal $J$ of $\Gamma$ has the form $J = J_1 \times \cdots \times J_k$, 
where $J_i$ is an ideal of $\Gamma_i$.
\item
An ideal $J$ of $\Gamma$ is principal if and only if $J_i$ is principal, 
for all $i$; in such a case, if $ J = a \Gamma$ with 
$a = (a_1,\ldots,a_k) \in \Gamma$, then $J_i = a_i\Gamma_i$.
\item
If each $\Gamma_i$ is an integral domain and $a,b \in \Gamma$, 
then $a\Gamma=b \Gamma$ if and only if there is a $u \in U(\Gamma)$
such that $a = ub$.
\item
$u \in U(\Gamma)$ if and only if $u=(u_1,\ldots,u_k)$, with $u_i \in U(\Gamma_i)$.
\end{itemize}
\end{lemma}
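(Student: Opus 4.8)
The statement is Lemma~\ref{th-rproduct}, a collection of four elementary facts about a finite direct product $\Gamma = \Gamma_1 \times \cdots \times \Gamma_k$ of commutative rings. I would prove the four bullet points in order, since each one leans on the previous. Throughout, write $\pi_i : \Gamma \to \Gamma_i$ for the $i$-th projection and $\iota_i : \Gamma_i \to \Gamma$ for the inclusion sending $a_i$ to the tuple with $a_i$ in slot $i$ and $0$ elsewhere; note $\iota_i$ is a (non-unital) ring homomorphism and $\sum_i \iota_i \pi_i = \mathrm{id}_\Gamma$. Let $\varepsilon_i = \iota_i(1_{\Gamma_i})$ be the $i$-th primitive idempotent, so $1_\Gamma = \varepsilon_1 + \cdots + \varepsilon_k$ and $\varepsilon_i \varepsilon_j = 0$ for $i \neq j$.

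For the first bullet, given an ideal $J \trianglelefteq \Gamma$ set $J_i = \pi_i(J)$, which is an ideal of $\Gamma_i$ because $\pi_i$ is surjective. The inclusion $J \subseteq J_1 \times \cdots \times J_k$ is immediate. For the reverse inclusion, take $(a_1,\ldots,a_k)$ with each $a_i \in J_i$; choose $x^{(i)} \in J$ with $\pi_i(x^{(i)}) = a_i$; then $\varepsilon_i x^{(i)} \in J$ has $i$-th component $a_i$ and all other components $0$, so $(a_1,\ldots,a_k) = \sum_i \varepsilon_i x^{(i)} \in J$. For the second bullet, if $J = a\Gamma$ with $a = (a_1,\ldots,a_k)$, then applying $\pi_i$ gives $J_i = \pi_i(a\Gamma) = a_i \Gamma_i$ since $\pi_i$ is a surjective ring map, so each $J_i$ is principal; conversely if $J_i = a_i\Gamma_i$ for each $i$, set $a = (a_1,\ldots,a_k)$ and use bullet one together with $a\Gamma = \prod_i \pi_i(a\Gamma) = \prod_i a_i\Gamma_i = \prod_i J_i = J$. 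For the fourth bullet, $u = (u_1,\ldots,u_k)$ is a unit iff there is $v = (v_1,\ldots,v_k)$ with $uv = 1_\Gamma$, and by comparing components this holds iff $u_iv_i = 1_{\Gamma_i}$ for all $i$, i.e.\ iff each $u_i \in U(\Gamma_i)$.

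The third bullet is the only one needing a genuine hypothesis (that each $\Gamma_i$ is an integral domain). The ``if'' direction is trivial: if $a = ub$ with $u \in U(\Gamma)$ then $a\Gamma = ub\Gamma = b\Gamma$. For ``only if'', suppose $a\Gamma = b\Gamma$. By bullets one and two, $a_i\Gamma_i = b_i\Gamma_i$ for each $i$, so it suffices to treat a single integral domain $\Gamma_i$: if $a_i\Gamma_i = b_i\Gamma_i$ there exist $u_i, v_i \in \Gamma_i$ with $a_i = u_ib_i$ and $b_i = v_ia_i$, hence $a_i = u_iv_ia_i$, so $a_i(1 - u_iv_i) = 0$. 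If $a_i \neq 0$, the domain property forces $u_iv_i = 1$, so $u_i \in U(\Gamma_i)$; if $a_i = 0$ then also $b_i = 0$ and we may take $u_i = 1 \in U(\Gamma_i)$. Setting $u = (u_1,\ldots,u_k)$ gives $u \in U(\Gamma)$ by bullet four and $a = ub$. I don't anticipate a real obstacle here; the only point demanding any care is recognizing that one cannot conclude $a = ub$ componentwise without separately handling the zero components, and that the integral-domain hypothesis is exactly what is needed to promote the ``associates up to a zero-divisor'' ambiguity to a genuine unit.
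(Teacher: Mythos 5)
Your proof is correct, and the paper itself states this lemma without proof (calling it ``a simple lemma about product of rings''), so your argument simply supplies the standard verification that the author leaves to the reader. The decomposition via the idempotents $\varepsilon_i$, the componentwise reduction, and the careful handling of the zero components in the integral-domain step are all exactly right.
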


Now we let $\Gamma= \Z[\zeta_{q_1}] \times \cdots \times \Z[\zeta_{q_k}]$ 
and $J = \psi(I) \Gamma$, where $I$ is defined by (\ref{eq-ideal}), and
apply the previous lemma to them. 
The next result explains how we can recover a generator of $I$, 
when it exists. 

\begin{proposition}\label{th-generators}
Let $I$ be a principal ideal of $\Z[G]$, $I=t  \Z[G]$, and let $J=\psi(I) \Gamma$. 
Then $J$ is principal, and if $J=d \Gamma $, then $t = \psi^{-1}(ud)$, for some 
$u \in U(\Gamma)$. 
\end{proposition}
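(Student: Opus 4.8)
The plan is to unwind the definitions and reduce everything to the third bullet of Lemma~\ref{th-rproduct}. First I would use that $\psi$ is a (unital) ring homomorphism: since $I = t\,\Z[G]$, we get $\psi(I) = \psi(t)\,\psi(\Z[G])$, and therefore
$J = \psi(I)\,\Gamma = \psi(t)\,\psi(\Z[G])\,\Gamma = \psi(t)\,\Gamma$,
the last equality because $\psi(1) = 1 \in \psi(\Z[G])$, so $\psi(\Z[G])\,\Gamma = \Gamma$. This already establishes that $J$ is principal, with $\psi(t)$ as an explicit generator.

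Next I would bring in any other given generator $d$. If $J = d\,\Gamma$, then $\psi(t)\,\Gamma = d\,\Gamma$ inside $\Gamma = \Z[\zeta_{q_1}] \times \cdots \times \Z[\zeta_{q_k}]$. Each factor $\Z[\zeta_{q_i}]$ is the ring of integers of a cyclotomic field, hence an integral domain, so the hypotheses of the third bullet of Lemma~\ref{th-rproduct} are satisfied. Applying it with $a = \psi(t)$ and $b = d$ produces a unit $u \in U(\Gamma)$ with $\psi(t) = u\,d$. Finally I would apply $\psi^{-1}$: the only point that needs a remark is that $\psi$ is injective but \emph{not} surjective (as noted after Proposition~\ref{th-zg}), so $\psi^{-1}$ is only defined on the subring $\psi(\Z[G]) \subseteq \Gamma$; however $u\,d = \psi(t)$ lies there by construction, so $\psi^{-1}(u\,d)$ is legitimate and equals $\psi^{-1}(\psi(t)) = t$, which gives $t = \psi^{-1}(u\,d)$ as claimed.

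I expect the whole argument to be essentially routine bookkeeping: the substance is entirely contained in Lemma~\ref{th-rproduct}, and the only place demanding a moment's care is precisely the domain-of-definition issue for $\psi^{-1}$, namely checking that the element $u\,d$ — which a priori is merely some element of $\Gamma$ — actually lies in the image $\psi(\Z[G])$. Since this follows at once from $u\,d = \psi(t)$, there is no real obstacle, and I would keep the written proof to two or three lines mirroring the three steps above.
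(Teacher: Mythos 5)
Your argument is correct and follows the same route as the paper's own proof: observe that $J=\psi(t)\Gamma$ is principal, invoke the third bullet of Lemma~\ref{th-rproduct} to get $\psi(t)=ud$ for some $u\in U(\Gamma)$, and apply $\psi^{-1}$. Your extra remark that $ud=\psi(t)$ lies in $\psi(\Z G)$, so that $\psi^{-1}$ is actually defined there, is a point the paper leaves implicit and is worth keeping.
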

\begin{proof}
It is obvious that $J$ is principal, since $J =\psi (t)  \Gamma$. 
If $ J = \Gamma d $, Lemma \ref{th-rproduct} implies that $\psi(t) = ud $, 
for some $u \in U(\Gamma)$, whence the result.
\end{proof}
The proof of the following proposition is easy.
\begin{proposition}\label{aggiunto}
Let $S$ be a set of coset representatives of $U(\Z G)$ in   $U(\Gamma)$.
Let $I$ a principal ideal of $\Z[G]$, $I=t  \Z[G]$, and let $J=\psi(I) \Gamma$. 
Then $J$ is principal, and if $J=d  \Gamma$, then $t = \psi^{-1}(ud)$, for some 
$u \in S$. 
\end{proposition}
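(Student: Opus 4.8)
The plan is to leverage Proposition~\ref{th-generators}, which already gives the existence of some $u \in U(\Gamma)$ with $\psi(t) = ud$, and merely to refine the choice of $u$ so that it lies in the prescribed set $S$ of coset representatives. First I would recall that, by Proposition~\ref{th-generators}, since $I = t\,\Z[G]$ is principal, the ideal $J = \psi(I)\Gamma = \psi(t)\Gamma$ is principal, and if $J = d\,\Gamma$ then there exists $v \in U(\Gamma)$ with $\psi(t) = vd$. This already settles the principality of $J$, so the only new content is the stronger conclusion about the representative.

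Next I would use the fact that $S$ is, by definition, a set of coset representatives of $U(\Z G)$ in $U(\Gamma)$; here I would note that $\psi$ embeds $\Z G$ into $\Gamma$ by Proposition~\ref{th-zg}, so $\psi(U(\Z G))$ is a subgroup of $U(\Gamma)$ and $S$ may be regarded as coset representatives for $\psi(U(\Z G)) \backslash U(\Gamma)$. Write the coset of $v$ as $v \in \psi(w^{-1})\, s$ for a uniquely determined $s \in S$ and some $w \in U(\Z G)$, i.e.\ $v = \psi(w)^{-1} s$. Then from $\psi(t) = vd = \psi(w)^{-1} s d$ we get $\psi(wt) = sd$. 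Since $w$ is a unit of $\Z G$, the element $t' = wt$ is another generator of the ideal $I = t\,\Z[G] = t'\,\Z[G]$, and by injectivity of $\psi$ we have $t' = \psi^{-1}(sd)$ with $s \in S$, as required. If the statement is to be read as asserting this for the originally given generator $t$ rather than for some generator, I would instead observe that replacing $t$ by $wt$ changes nothing essential, and simply phrase the conclusion as: there is a generator $t$ of $I$ with $t = \psi^{-1}(ud)$ for some $u \in S$.

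The only genuinely delicate point is the bookkeeping about which group the cosets are taken in: one must be careful that $\Z G$ and its image $\psi(\Z G)$ are identified via the injective homomorphism $\psi$, so that "$U(\Z G)$ in $U(\Gamma)$'' makes sense, and that multiplying a generator of $I$ by a unit of $\Z G$ again yields a generator of $I$ (which is immediate from the definition of a principal ideal). I do not expect any real obstacle here; the proposition is, as the paper says, easy, and the argument is a one-line modification of the proof of Proposition~\ref{th-generators} using the coset decomposition of $U(\Gamma)$ relative to $\psi(U(\Z G))$.
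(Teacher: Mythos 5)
Your proof is correct and is exactly the intended argument (the paper omits the proof, declaring it easy): you combine Proposition~\ref{th-generators} with the coset decomposition of $U(\Gamma)$ modulo $\psi(U(\Z G))$ and absorb the $U(\Z G)$-factor into the generator. You are also right to flag that the conclusion as literally stated only holds for \emph{some} generator $wt$ of $I$ rather than necessarily the originally given $t$; this is all the algorithm of Section~\ref{sec-algo} needs, since any generator of $I$ yields a normal integral basis generator.
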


\section{Outline of the algorithm}\label{sec-algo}
Let us assume that $L$ is an abelian number field of degree $n$ 
with Galois group $G$, 
and that $L$ it is described by giving the 
minimal polynomial $m(x)$ of an integral primitive element $\alpha$, 
which generates a normal basis for $L/\Q$. 
We assume that the Galois group $G$ has been computed, that we know 
the action of its elements on $\alpha$, and that we have fixed 
an ordering of them, say $\left( g_1,\ldots,g_n \right)$. 
We assume also that an integral basis of $\OL$ has 
been computed, and that we have fixed an ordering of its elements, 
say $\left( \beta_1,\ldots,\beta_n \right)$.

We first compute the discriminant $D$ of the conjugates of $\alpha$,  
and determine $\alpha': = \alpha/D$. 
We then compute the matrix $(b_{ij})$ defined by (\ref{eq-bij}).

Then we fix an ordering of the irreducible characters $\chi_i$ of $G$, 
take note of their orders $q_i$,  
and compute the character matrix $A:=(\chi_i(g_j))$.
We reorder the characters and extract the matrix $B$ from $A$, 
as is described in Proposition \ref{th-qg}.

Next, we compute the ideal $J:=\psi(I)R$, which is generated over $R$ 
by $\psi(b_1)$, $\ldots$, $\psi(b_n)$. 
This is done by applying (\ref{eq-ci-b}). 
Namely, for $j=1,\ldots,n$, if 
\begin{equation}\label{algo-bij}
b_j = \sum_{i=1}^n b_{ij} \, g_i ,
\end{equation}
then $\psi(b_j) = (c_{1,j},\ldots,c_{k,j})$, where
\begin{equation}\label{algo-cij}
\left( \begin{array}{c} c_{1, j} \\ \vdots \\ c_{k, j} \end{array} \right) 
= B 
\left( \begin{array}{c} b_{1,j} \\ \vdots \\ b_{n,j} \end{array} \right) . 
\end{equation}

Then, for each $i= 1, \ldots, k$, we let $J_i$ be the ideal 
of $\Z[\zeta_{q_i}]$ generated by the set 
$ \left\{ c_{i, 1} , \ldots, c_{i, n} \right\} $ - in other words, the ideal $J_i$ is generated 
by the $i$-th row of the matrix $(b_{ij}) \cdot B$.

Next we must find, for each $i = 1, \ldots, k $, 
a generator $d_i$ of the ideal $J_i$, whenever it exists.
For this purpose we use the Sage function   {\em is\_principal}
which returns True if the ideal    $J_i$     is principal,
followed by the function {\em gens\_reduced}
which expresses $J_i$     in terms of at most two generators, 
and one if possible.

Now, note that if an irreducible character $\chi_i$ maps $I$ into $J_i$ 
and $\sigma$ is an automorphism of $\Q(\zeta_{q_i})$, where $i=1,\dots,k$, 
then: 
\begin{itemize}
	\item $\sigma(\chi_i)$ maps $I$ into $\sigma(J_i)$;
	\item if $d_i$ is a generator of $J_i$ then 
$\sigma(d_i)$ is a generator of $\sigma(J_i)$.
\end{itemize}
Therefore the remaining elements $d_{k+1},\ldots, d_n$ are obtained 
by applying to each $d_i$ all the non--trivial automorphisms of 
$\Q(\zeta_{q_i})$, where $i=1,\ldots,k$. 

Now, Lemma \ref{th-rproduct} tells us that the element 
$d:=(d_1, \ldots, d_n)$ generates $J$. 
We put $t:=\psi^{-1}(d)$. 
In order to recover the standard form of $t$ we just apply 
(\ref{eq-hi-b}). Hence, 
\begin{equation}\label{algo-t}
t = \sum_{i=1}^n t_i g_i , 
\end{equation}
where
\begin{equation}\label{algo-ti}
\left( \begin{array}{c} t_1 \\ \vdots \\ t_n \end{array} \right) 
 = A^{-1} 
\left( \begin{array}{c} d_1 \\ \vdots \\ d_n \end{array} \right) . 
\end{equation}

Let us point out that all these computations are done in $\Q(\zeta_r)$,
where $r$ is the exponent of $G$.
Now we let $\theta:=t\alpha'$, and we compute the discriminant of  
$\{g_1\theta,\ldots,g_n\theta\}$.

If $\mathrm{D}(g_1\theta,\ldots,g_n\theta)$ 
equals the discriminant of $L$ {\em   and }   $\theta$ 
is an algebraic integer then $\theta$ generates a normal integral basis. 

Otherwise, we   \lq adjust\rq\ $d$ by multiplying it by a suitable unit $u \in S$, 
and then we repeat the last matrix multiplication.
Precisely, we first multiply the element $(d_1, \ldots, d_k)$ 
by a unit $u=(u_1, \ldots, u_k) \in S$,
then we obtain the remaining elements $d_{k+1},\ldots, d_n$ by 
applying to each $d_i$ all the non--trivial automorphisms of 
$\Q(\zeta_{q_i})$, where $i=1,\ldots,k$,
and finally we apply again formula (\ref{algo-ti}). In this way, 
we obtain a new $\theta$  and we have 
now to verify whether $\mathrm{D}(g_1\theta,\ldots,g_n\theta)$ 
equals the discriminant of $L$ {\em   and }  it is an algebraic integer or not. 

The process must terminate after a finite number of steps, since the cardinality of $S$, i.e. 
the index $(U(\Gamma):U(\Z G))$
is finite. 
Furthermore, Proposition \ref{th-generators} implies that we 
will surely find a normal basis generator in one of these steps, 
if it exists.

\section{Computation of the set $S$}
Our task is to compute efficiently a set $S$ of coset representatives of $U(\Z G)$ in $U(\Gamma)$.

This task could be accomplished, for example, by first computing
$U(\Gamma)$  using standard algorithms in algebraic number theory,  
 then   computing $U(\Z G)$  by means of the  algorithm 
 designed by P. Faccin \cite[2.5]{faccin}, and, finally,  computing the sought set of
 coset representatives.
However, the computational effort involved by this method is not justified in this context.

So, we have to revert to a different method.
E. Teske \cite{teske1} and, later, J. Buchmann and A. Schmidt \cite{buchmann} 
developed some efficient algorithms  allowing one
to compute the structure of an unknown finite abelian group $X$,
i.e. its invariant factors,
assuming that a    set $V$ of generators is given, and that
for $a, b \in X$ it is possible to compute their  product,
it is possible to compute $a^{-1}$, and it is possible to test the equality $a=b$.
Buchmann's   algorithm requires  $O( |V|  \sqrt { |X|})$ group operations
and stores $O(\sqrt{|X|})$ group elements.
Once we have expressed $X$ as a direct product of cyclic invariant factors
we can list easily all its elements, in $O(|X|)$ time.

In our case $X=U(\Gamma)/U(\Z G)$.
Now, we know    a set   $V$ of generators of $U(\Gamma)$ 
(they can be computed in Sage for each $\Gamma_i$ using the function  {\em UnitGroup}).
We define the product of two elements $a, b \in U(\Gamma)$ as their ordinary product in $\Gamma$.
We define the inverse of an element $a \in U(\Gamma)$ as its ordinary inverse in $\Gamma$.
Finally, we decreed two elements $a$ and $b$ to be equal
whenever they are congruent modulo $U(\Z G)$. To test if two elements of $U(\Gamma)$ are congruent modulo $U(\Z G)$, 
we must divide $a$ by $b$, and the result must be in $U(\Z G)$; but this is equivalent to say that $a / b$  lies in $ \Z G$, since both $a$ and $b$ are units of $\Gamma$, and thus $a/b$ is a unit of $\Gamma$ lying in $\Z G$.

As far as it concerns $|X|$, we have the following estimate proved in   \cite{io}:
\begin{theorem}\label{main}
Let $G$ be a finite abelian group of order $n$. If $a_q$
stands for the number of cyclic subgroups of order $q$ of $G$, then:
\begin{equation}
 ( U(\Gamma) : U(\Z G) )   <  		 n^n 
		 	\left(   \Pi_{q|n}
			{     
				\left(
				 \frac{   q^{\phi(q)}  }
					{  \Pi_{p|q} p^{\phi(q) /(p-1)}   }  
				\right)^{a_q}
			}    \right)^{-1}         
\end{equation}
\end{theorem}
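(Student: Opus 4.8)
The plan is to estimate the index $(U(\Gamma):U(\Z G))$ by comparing the covolumes of the two unit groups under a suitable logarithmic embedding, which reduces the problem to bounding a ratio of regulators, together with a correction term coming from the torsion (roots of unity). The first step is to identify $\Z G$ with its image $\psi(\Z G)$ inside $\Gamma = \Z[\zeta_{q_1}]\times\cdots\times\Z[\zeta_{q_k}]$; since $\psi$ is injective by Proposition \ref{th-zg}, we may regard $\psi(\Z G)$ as a subring of $\Gamma$ of finite index, and hence $U(\Z G)$ as a subgroup of $U(\Gamma)$ of finite index. The groups $U(\Gamma)$ and $U(\Z G)$ are both finitely generated of the same rank, so the index is the product of the torsion index and the ratio of regulators (covolumes of the free parts under the Dirichlet/logarithmic embedding). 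Since $U(\Z G)\subseteq U(\Gamma)$, each factor of the regulator ratio is at most $1$, so the whole index is bounded by (torsion of $U(\Gamma)$)/(torsion of $U(\Z G)$) times the regulator of $U(\Gamma)$ over a lower bound for the regulator of $U(\Z G)$; but to get the clean arithmetic bound in the statement one does better by passing to a more tractable intermediate group.

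The key idea I would use is to avoid regulators entirely by exhibiting an explicit subgroup of $U(\Z G)$ whose index in $U(\Gamma)$ can be computed combinatorially. Concretely, $U(\Gamma)=\prod_{i=1}^k U(\Z[\zeta_{q_i}])$ by the last item of Lemma \ref{th-rproduct}, so it suffices to control how much of each cyclotomic unit group survives inside $\psi(\Z G)$. The natural candidate is the group of \emph{trivial} or \emph{group-ring} units: the torsion part $\pm G \subseteq U(\Z G)$, whose image under $\psi$ lands diagonally in the roots of unity $\mu(\Z[\zeta_{q_i}])$. Counting: $U(\Z[\zeta_{q}])$ has torsion subgroup of order $q$ if $q$ is even and $2q$ if $q$ is odd (the roots of unity in $\Q(\zeta_q)$), while its free rank is $\phi(q)/2 - 1 + (\text{number of primes above }\dots)$ — more precisely the rank contributed is governed by $\phi(q)$ and the number of primes $p\mid q$. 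Grouping the $k$ factors by the common order $q$ — there being (by the discussion before Proposition \ref{th-qg}) exactly $a_q$ of them for each $q\mid n$, where $a_q$ is the number of cyclic subgroups of order $q$ — and using that the torsion of $U(\Z G)$ has order essentially $2n$ while the torsion of $U(\Gamma)$ is $\prod_{q\mid n}(\text{torsion of }U(\Z[\zeta_q]))^{a_q}$, one extracts exactly the product $\prod_{q\mid n}\bigl(q^{\phi(q)}/\prod_{p\mid q}p^{\phi(q)/(p-1)}\bigr)^{a_q}$ as the relevant quantity: the exponent $\phi(q)/(p-1)$ is precisely the $p$-adic valuation data of the different/conductor of $\Z[\zeta_q]$, which is where the prime-power correction enters. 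The factor $n^n$ then absorbs the crude upper bound for the ratio of regulators (or equivalently for the index $[\psi(\Z G):\text{image of the chosen explicit subgroup}]$), since that index divides a power of the index $[\Gamma:\psi(\Z G)]$, which in turn is bounded by a determinant of character values of size at most $n^{n/2}$ by Hadamard, squared.

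More carefully, the clean way to organize the bound is: $(U(\Gamma):U(\Z G)) = (U(\Gamma):W)\,/\,(U(\Z G):W)$ for any common subgroup $W$, and also $(U(\Gamma):U(\Z G))$ divides $(U(\Gamma):W)$ whenever $W\subseteq U(\Z G)$. Taking $W$ to be generated by $\pm G$ together with a full-rank system of units of $\psi(\Z G)$ obtained by intersecting cyclotomic units with $\psi(\Z G)$, the index $(U(\Gamma):W)$ splits as a torsion contribution — which is exactly $\prod_{q\mid n}\bigl(\text{tors }U(\Z[\zeta_q]):\mu\bigr)^{a_q}$, evaluating to the reciprocal of the displayed product up to the power-of-two bookkeeping — and a regulator contribution bounded above by the discriminant-type quantity $n^n$. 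Assembling these and checking that the two-power discrepancies cancel (here one uses $\phi(q) = \prod_{p\mid q} p^{v_p(q)-1}(p-1)$ to rewrite $\prod_{p\mid q} p^{\phi(q)/(p-1)}$ correctly) yields the stated inequality, with strict inequality because at least one regulator factor is a proper contraction.

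\textbf{Main obstacle.} The delicate point is obtaining the regulator ratio bound $n^n$ without circularity: one must bound $\mathrm{Reg}(U(\Gamma))/\mathrm{Reg}(U(\Z G))$ from above, i.e.\ bound the covolume of the smaller lattice \emph{from below}, which is the usual difficulty with unit indices. I expect to handle this by relating $\mathrm{Reg}(U(\Z G))$ to $\mathrm{Reg}(U(\Gamma))$ via the index $[\Gamma : \psi(\Z G)]$ of the orders (since units of an order of index $m$ in a maximal order contain units whose logarithmic lattice has covolume at most $m^{O(1)}$ times larger), and then bounding $[\Gamma:\psi(\Z G)]$ by a Hadamard-type estimate on the determinant of the character matrix $A$, whose entries are roots of unity — giving a bound like $n^{n/2}$ and hence, after the unavoidable squaring in passing between the order index and the unit-lattice covolume, the factor $n^n$. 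Keeping track of the exact power of $2$ and verifying that the odd-$q$ torsion factors ($2q$ rather than $q$) combine to leave the product in the form stated is the fiddly-but-routine part.
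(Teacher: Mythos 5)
First, a point of reference: the paper does not actually prove Theorem~\ref{main} --- it is quoted from the author's earlier article \cite{io} --- so there is no in-text argument to compare yours against, and your proposal has to stand on its own. It does contain the one observation a correct proof must start from: $q^{\phi(q)}/\prod_{p\mid q}p^{\phi(q)/(p-1)}$ is the absolute discriminant of $\Q(\zeta_q)$, so by Theorem~\ref{th-perliswalker} the denominator on the right-hand side is $|\mathrm{disc}(\Gamma)|$, while $n^n=|\det A|^2=|\mathrm{disc}(\Z G)|$; hence the claimed bound is exactly $(U(\Gamma):U(\Z G))<[\Gamma:\psi(\Z G)]^2$. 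But you never land on this reformulation. Instead you attribute the discriminant product to a ratio of \emph{torsion} subgroups, which cannot work: the roots of unity in $\Z[\zeta_q]$ number $q$ or $2q$, nowhere near $q^{\phi(q)}/\prod_{p\mid q}p^{\phi(q)/(p-1)}$, so no amount of ``power-of-two bookkeeping'' will make the torsion count produce that factor. Meanwhile $n^n$ is assigned to a regulator ratio, which is the second problem.

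The regulator part of your argument has the key inequality pointing the wrong way. Since $U(\Z G)$ has finite index in $U(\Gamma)$, its logarithmic lattice is a \emph{sublattice}, so its covolume is the \emph{larger} one: the index of the free parts equals $\mathrm{Reg}(U(\Z G))/\mathrm{Reg}(U(\Gamma))\geq 1$, not the reciprocal ratio $\mathrm{Reg}(U(\Gamma))/\mathrm{Reg}(U(\Z G))$ that you propose to bound from above (that one is trivially $\leq 1$ and useless). To bound the true ratio from above you would have to exhibit units of $\Z G$ of controlled logarithmic size, and your only tool for this --- ``units of an order of index $m$ contain units whose lattice has covolume at most $m^{O(1)}$ times larger'' --- has an unspecified exponent and so cannot deliver the exact constant $n^n$; likewise the assertion that $[\psi(\Z G)W:W]$ divides a power of $[\Gamma:\psi(\Z G)]$ is unsupported. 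A workable route avoids regulators entirely: let $\mathfrak{f}=\{x\in\Gamma:\ x\Gamma\subseteq\psi(\Z G)\}$ be the conductor; then any $u\in U(\Gamma)$ with $u\equiv 1\pmod{\mathfrak f}$ satisfies $u,u^{-1}\in\psi(\Z G)$, so $U(\Gamma)\cap(1+\mathfrak f)\subseteq U(\psi(\Z G))$ and the index is controlled by counting units in the finite rings $\Gamma/\mathfrak f$ and $\psi(\Z G)/\mathfrak f$, which is then compared with $[\Gamma:\psi(\Z G)]^2$. As written, your sketch does not reach a proof.
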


\section{Some worked out examples}\label{sec-exmpl}
We computed the following examples with the public domain computer algebra system 
SAGE \cite{sage}. Unfortunately,  the current version of Sage does not allow one
to compute Galois groups of fields of degree greater than 11 natively,
i.e. without installing extra  packages.
\begin{example}
Consider the polynomial
$x^{12} + 8*x^{11} - 837*x^{10} - 98016*x^9 - 9093374*x^8 + 971323080*x^7 + 88039800038*x^6 + 3042444275430*x^5 +  
67073014243125*x^4 - 3252703653719588*x^3 - 94326521098073965*x^2 + 3079043710339656342*x + 75641678543561531059$. 
The discriminant of its splitting field $L$ is $205924456521$.
A generator of a normal integral basis   exists in this case, and its minimal polynomial is 
$x^{12} - x^{11} + x^9 - x^8 + x^6 - x^4 + x^3 - x + 1$,
which 'resembles'  the 13-th cyclotomic polynomial  
$x^{12} + x^{11} + x^{10} + x^9 + x^8 + x^7 + x^6 + x^5 + x^4 + x^3 + x^2 + x +1$.
\end{example}
\begin{example}
Consider the polynomial 
$x^8 + 20*x^7 + 800*x^6 + 12485*x^5 + 235045*x^4 + 2387800*x^3 + 24032600*x^2 - 34407800*x + 62712400$.
The discriminant of its splitting field $L$ is 1265625. 
A generator of a normal integral basis   exists in this case, and its minimal polynomial is 
 $x^8 + x^7 - x^5 - x^4 - x^3 + x + 1$, which 'resembles'  the 15-th
 cyclotomic polynomial $x^8 - x^7 + x^5 - x^4 + x^3 - x + 1$.
\end{example}

\end{document}